\begin{document}
\newtheorem{theorem}{Theorem}
\theoremstyle{definition}
\newtheorem{remark}[theorem]{Remark}

\def\T{\mathrm{T}}
\def\L{\mathcal{L}}
\def\d{\mathrm{d}}
\def\O{\Omega}
\def\a{\alpha}
\def\b{\beta}
\def\g{\gamma}
\def\la{\langle}
\def\ra{\rangle}
\def\tr{{\rm tr}}
\def\id{{\rm Id}}
\newcommand{\be}{\begin{equation}}
\newcommand{\ee}{\end{equation}}
\def\beq{\begin{eqnarray*}}
\def\eeq{\end{eqnarray*}}
\def\p{\psi}
\def\.{{\cdot}}

\title{On pluricanonical locally conformally K\"ahler manifolds}
\author{Andrei Moroianu} 
\address{Laboratoire de Math\'ematiques de Versailles, UVSQ, CNRS, Universit\'e Paris-Saclay, 78035 Versailles, France }
\email{andrei.moroianu@math.cnrs.fr}
\author{Sergiu Moroianu}
\address{Institutul de Matematic\u{a} al Academiei Rom\^{a}ne\\
P.O. Box 1-764\\RO-014700
Bu\-cha\-rest, Romania}
\email{moroianu@alum.mit.edu}
\thanks{This work was partially supported by the CNCS grant PN-II-RU-TE-2012-3-0492}
\begin{abstract}
We prove that compact pluricanonical locally conformally K\"ahler manifolds have parallel Lee form. 
\end{abstract}

\keywords{Vaisman manifolds, pluricanonical lcK manifolds.} 
\maketitle

\section{Introduction}

A locally conformally K\"ahler (lcK) manifold is a complex manifold $(M,J)$ together with a Hermitian metric $g$ 
which is conformal to a K\"ahler metric in the neighbourhood of every point. The logarithmic differentials of the conformal factors glue up to a globally defined
closed 1-form $\theta$, called the {\em Lee form}, 
such that the fundamental 2-form $\Omega:=g(J\cdot,\cdot)$ satisfies 
\begin{equation}\label{do}
\d\Omega=\theta\wedge\Omega.
\end{equation}

If $\theta$ vanishes identically, the manifold $(M,g,J)$ is K\"ahler. We will implicitly assume in the whole paper that the lcK structure is proper, i.e. that $\theta$ is not identically zero.

When $\theta$ is parallel with respect to the Levi-Civita connection $\nabla$ of $g$, the lcK manifold $(M,J,g)$ is called \emph{Vaisman}. 

G.\ Kokarev introduced in the context of harmonic maps \cite{K} the seemingly larger class of \emph{pluricanonical lcK manifolds}, 
defined as those lcK manifolds $(M,g,J)$ satisfying 
\begin{equation}\label{defp0}
(\nabla\theta)^{1,1}=0.
\end{equation}

In their recent preprint \cite{OV3}, L. Ornea and M. Verbitsky announce the proof of the following result:
\begin{theorem}\label{main}
Every compact pluricanonical lcK manifold $(M,J,g)$ is Vaisman.
\end{theorem}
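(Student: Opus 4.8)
The plan is to show that the Lee form, which turns out to be automatically harmonic, is in fact parallel, by a Bochner argument whose curvature term is controlled precisely by the pluricanonical condition.

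First I would record the algebraic consequences of $(\nabla\theta)^{1,1}=0$. Since $\theta$ is closed, $\nabla\theta$ is a symmetric $2$-tensor, and the vanishing of its $J$-invariant part means $(\nabla\theta)(JX,JY)=-(\nabla\theta)(X,Y)$. In a local $J$-adapted orthonormal frame $\{e_i,Je_i\}$ the contributions of $e_i$ and $Je_i$ to $\tr(\nabla\theta)$ cancel in pairs, so $\delta\theta=-\tr(\nabla\theta)=0$; together with $d\theta=0$ this shows $\theta$ is harmonic on any, not necessarily compact, pluricanonical lcK manifold. I also note that $\mathcal{L}_{\xi}g=2\nabla\theta$ for the Lee field $\xi:=\theta^\sharp$, so that $\xi$ is Killing, hence $\theta$ parallel, exactly when $\nabla\theta=0$.

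Next I would bring in compactness through the Weitzenb\"ock formula. Harmonicity gives $\nabla^*\nabla\theta=-\mathrm{Ric}(\theta)$, and pairing with $\theta$ and integrating yields the identity $\int_M|\nabla\theta|^2=-\int_M\mathrm{Ric}(\theta,\theta)$. In particular $\int_M\mathrm{Ric}(\theta,\theta)\le 0$ automatically, and the theorem is equivalent to the reverse inequality $\int_M\mathrm{Ric}(\theta,\theta)\ge 0$, the two bounds together forcing $\nabla\theta=0$. Thus everything is reduced to establishing a sign for the total Lee--Ricci term.

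The heart of the argument, and the step I expect to be the main obstacle, is to compute $\mathrm{Ric}(\theta,\theta)$ intrinsically and to exploit $(\nabla\theta)^{1,1}=0$. Here I would start from the lcK structure equation expressing $\nabla J$, equivalently $\nabla\Omega$, in terms of $\theta$, $\xi$ and $\Omega$, which follows from $d\Omega=\theta\wedge\Omega$ and the integrability of $J$; a contraction of its covariant derivative expresses $\mathrm{Ric}(\theta,\theta)$ as a combination of $|\nabla\theta|^2$, of $|\theta|^4$, of $\nabla\theta$ contracted against $\theta\otimes\theta$ and against $\Omega$, and of exact divergence terms. The pluricanonical condition should be exactly what annihilates the indefinite $J$-invariant contributions: because $\nabla\theta$ is $J$-anti-invariant and trace-free, the quadratic quantities $\nabla\theta(\xi,\xi)$, $\nabla\theta(J\xi,J\xi)$ and $\sum_i\nabla\theta(e_i,Je_i)$ occurring in the contraction either cancel in pairs or integrate to zero against the divergences, leaving a manifestly nonnegative integrand. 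Concretely I would aim to show along the way that $\xi$ and $J\xi$ lie in the kernel of $\nabla\theta$ and that $|\theta|$ is constant, equivalently that the anti-Lee field $J\xi$ is Killing, after which the surviving integral is a sum of squares and $\int_M\mathrm{Ric}(\theta,\theta)\ge 0$ follows. The delicate part is the bookkeeping of the curvature term so that every indefinite piece is either killed by anti-invariance or is a genuine divergence; making these cancellations close up is where compactness and the pluricanonical hypothesis must interact exactly.
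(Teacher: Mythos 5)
Your preliminary steps are correct and do hold on any pluricanonical lcK manifold: since $\theta$ is closed, $\nabla\theta$ is symmetric, the pluricanonical condition makes it $J$-anti-invariant and hence trace-free, so $\theta$ is harmonic, and the integrated Weitzenb\"ock formula gives $\int_M|\nabla\theta|^2=-\int_M\mathrm{Ric}(\theta,\theta)$. The gap is in the ``heart'' of your argument, and it is not merely a computation left undone: the reduction is circular. As you yourself anticipate (and as the paper proves from $0=\d^2(J\theta)$), the norm $|\theta|$ is constant on any pluricanonical lcK manifold. But then the \emph{pointwise} Bochner formula for the harmonic $1$-form $\theta$, applied with $\Delta|\theta|^2=0$, forces $\mathrm{Ric}(\theta,\theta)=-|\nabla\theta|^2\le 0$ at every point. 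Consequently the inequality $\int_M\mathrm{Ric}(\theta,\theta)\ge 0$ you aim at is not a curvature estimate that could drop out of contracting the lcK structure equation: it is literally equivalent to $\nabla\theta\equiv 0$, i.e.\ to the theorem itself, and your Weitzenb\"ock identity degenerates into the tautology $\int_M|\nabla\theta|^2=\int_M|\nabla\theta|^2$, giving zero leverage. Moreover, the specific terms whose cancellation you expect to produce positivity --- $\nabla\theta(\xi,\xi)$, $\nabla\theta(J\xi,J\xi)$ and $\sum_i\nabla\theta(e_i,Je_i)$ --- all vanish identically (the last equals $\mathrm{tr}(JS)=0$ because $S:=\nabla\xi$ anti-commutes with $J$, and the first two vanish once $|\theta|$ is constant), so they cannot be the source of the sign you need. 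No mechanism in your sketch can produce a ``manifestly nonnegative integrand'' equal to $-|\nabla\theta|^2$ short of proving $\nabla\theta=0$ outright.

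What is actually required is second-order information on $S$, which the Weitzenb\"ock formula for the $1$-form $\theta$ cannot see. The paper's route: show $S\xi=SJ\xi=0$ and $[\xi,J\xi]=0$, derive $\mathcal{L}_\xi J=2JS$ and $\mathcal{L}_{J\xi}J=-2S$, and combine these to obtain the identity $(\mathcal{L}_\xi^2+\mathcal{L}_{J\xi}^2)(\mathrm{tr}(S^2))=4\,\mathrm{tr}((\mathcal{L}_\xi S)^2)+8\,\mathrm{tr}(S^4)$, in which $\mathcal{L}_\xi S$ is symmetric, so the right-hand side is nonnegative; evaluating at a maximum point of $\mathrm{tr}(S^2)$ (or, closer to your spirit, integrating this identity over $M$, which is legitimate because $\xi$ and $J\xi$ are divergence-free) forces $S\equiv 0$. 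This identity along the commuting pair $\{\xi,J\xi\}$, not the Weitzenb\"ock formula for $\theta$, is the estimate worth aiming for. A further small error in your sketch: under the pluricanonical hypothesis one has $\mathcal{L}_{J\xi}g=2g(JS\cdot,\cdot)$, so ``$J\xi$ is Killing'' is not equivalent to constancy of $|\theta|$; it is equivalent to $S=0$, that is, again to the full conclusion.
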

The arguments given in \cite{OV3} are 
based on an impressive amount of previous results by numerous authors. 
Among these we mention: the classification of complex surfaces by Kodaira, the classification of complex surfaces of K\"ahler rank $1$ by 
Chiose and Toma \cite{CT} and Brunella \cite{Br}, some results by M.\ Kato concerning subvarieties of Hopf manifolds \cite{Ka},
the classification of surfaces carrying Vaisman metrics by Belgun \cite{Be}, as well as several previous results by Ornea, Kamishima  \cite{KO} and Ornea, Verbitsky, \cite{OV1}, \cite{OV2}. 

As a matter of fact, while were not able to follow their arguments in detail, we discovered instead that Theorem \ref{main} 
can be proved in a more direct way. Our idea is based on the observation that on a pluricanonical manifold, the flows of the metric duals 
$\xi$ and $J\xi$ of $\theta$ and $J\theta$ commute, and this eventually shows that some partial Laplacian (in the analists sense) of the square norm of $\nabla\xi$ is larger than or equal to the square norm of $\nabla\xi\circ \nabla\xi$. By looking at a point where $|\nabla\xi|^2$ is maximal, this shows that $\nabla\xi\equiv 0$. 
The details of the proof are given in Section 3. 

In Section 4 we show, with similar arguments, that for every complete lcK manifold $M$ which is pluricanonical but not Vaisman, there exists a holomorphic and isometric injective immersion $\Phi:\Sigma\to M$ with vanishing second fundamental form, where $\Sigma$ is a Riemannian surface isometric 
either to the Euclidean plane or to a flat cylinder. 

In the final section we explain that due to Theorem 1, Kokarev's strong rigidity \cite[Theorem 5.1]{K} is essentially an empty result. This fact was pointed out by the anonymous referee, whom we also thank for other useful suggestions.

{\em Bibliographical remark.} After the first version of this note was made public, the manuscript \cite{OV3} was withdrawn and replaced with two other preprints \cite{OV4}, \cite{OV5}, where the proof of Theorem \ref{main} is no longer provided, but referred instead to our present work. We acknowledge, however, the decisive influence of \cite{OV3}, which constituted the original impetus for our study.

\section{Preliminaries on lcK metrics}

Whenever a Riemannian metric $g$ is given on a Riemannian manifold, it induces the so-called {\em musical isomorphisms} $\sharp:\T^*M\to \T M$ and $\flat:\T M\to\T^*M$, parallel with respect to the Levi-Civita connection of $g$ and inverse to each other, defined by $g(\alpha^\sharp,\cdot):=\alpha$ and $X^\flat:=g(X,\cdot)$ for every $\alpha\in\T^*M$ and $X\in \T M$.

Assume from now on that $(M,g,J,\theta)$ is an lcK manifold. It is well known that on Hermitian manifolds, the exterior derivative of the fundamental $2$-form $\Omega:=g(J\cdot,\cdot)$ determines its covariant derivative. The formula for the covariant derivative of $J$ determined by \eqref{do} is (see e.g.\ \cite{M}):
\begin{equation} \label{lck} 
\nabla_X J=\frac12\left(X\wedge J\theta+JX\wedge\theta\right),\qquad\forall X\in\T M,
\end{equation}
where if $\alpha$ is a 1-form, $J\alpha$ denotes the 1-form defined by $(J\alpha)(Y):=-\alpha(JY)$ for all tangent vectors $Y$,
and $X\wedge\alpha$ is the endomorphism of the tangent bundle defined by \[(X\wedge\alpha)(Y):=g(X,Y)\alpha^\sharp-\alpha(Y)X.\]
Note that in \cite{M}, a different normalization is used in the definition of the Lee form \eqref{do}, which introduces a factor $\frac12$ in \eqref{lck}, compared to the corresponding formula in \cite{M}.

Let $\xi:=\theta^\sharp$ denote the metric dual 
of the Lee form $\theta$.
Consider the bilinear form $\sigma:=\nabla\theta$ and the associated endomorphism $S:=\nabla\xi$. They are related by the formula $\sigma(\cdot,\cdot)=g(S\cdot,\cdot)$. Since the Lee form $\theta$ is closed, we get for every vector fields $X,Y$ on $M$:
\[0=\d\theta(X,Y)=(\nabla_X\theta)(Y)-(\nabla_Y\theta)(X)=\sigma(X,Y)-\sigma(Y,X),\]
thus showing that $\sigma$ is a symmetric bilinear form, and correspondingly $S$ is a symmetric endomorphism with respect to the metric $g$.

The pluricanonicality condition \eqref{defp0} is equivalent to 
\[0=\nabla\theta(X,Y)+\nabla\theta(JX,JY)=g(SX,Y)+g(SJX,JY)=g(SX-JSJX,Y),\]
for every vector fields $X,Y$ on $M$. We thus see that an lcK structure is pluricanonical if and only if the tensor $S:=\nabla(\theta^\sharp)$ satisfies $S=JSJ$, or equivalently 
\begin{equation}\label{defp}
SJ=-JS.
\end{equation}

\section{Proof of Theorem \ref{main}}

Assume from now on that $(M,g,J,\theta)$ is a pluricanonical lcK manifold.
We need to show that, under the compactness assumption, 
the relation \eqref{defp} implies the vanishing of $S$. 
From the definition of $S$, together with \eqref{lck}, we have 
\begin{equation}\label{nt1}
\nabla_X\xi=SX,\qquad \nabla_X(J\xi)=JSX+\frac12(\theta(X)J\xi+\theta(JX)\xi-|\theta|^2JX),
\end{equation}
which by lowering the indices also reads
\begin{equation}\label{nt}
\nabla_X\theta=(SX)^\flat,\qquad \nabla_X(J\theta)=(JSX)^\flat+\frac12X\lrcorner(\theta\wedge J\theta-|\theta|^2\Omega).
\end{equation}
By \eqref{defp}, the endomorphism $JS$ is symmetric. From \eqref{nt} we thus get 
\begin{equation}\label{djt} 
\d (J\theta)=\theta\wedge J\theta-|\theta|^2\Omega,
\end{equation}
\begin{equation}\label{lieg}
\L_\xi g=2g(S\cdot,\cdot),\qquad \L_{J\xi}g=2g(JS\cdot,\cdot).
\end{equation}
Taking a further exterior derivative in \eqref{djt} and using \eqref {do} yields 
\[0=\d^2 (J\theta)=-\theta\wedge\d(J\theta)-\d(|\theta |^2)\wedge\Omega-|\theta |^2\d\Omega=-\d(|\theta |^2)\wedge\Omega,\]
whence $|\theta|^2$ is constant on $M$ (this constancy property of pluricanonical metrics was already noticed in \cite{OV3}) . We thus obtain for every tangent vector $X$:
\[0=X(|\xi|^2)=2g(\nabla_X\xi,\xi)=2g(SX,\xi)=2g(S\xi,X),\]
showing that $S\xi=0$ (and therefore also $SJ\xi=0$ from \eqref{defp}). Using \eqref{nt1} we thus get 
$\nabla_{J\xi}\xi=\nabla_\xi(J\xi)=\nabla_{J\xi}(J\xi)=\nabla_\xi\xi=0$, 
and in particular 
\begin{equation}\label{com}
[\xi,J\xi]=0.
\end{equation}
We note for later use that the distribution $\{\xi,J\xi\}$ is integrable, and its integral leaves are totally geodesic.

Cartan's formula shows that on every lcK manifold
\begin{equation}\label{ljo}
\L_{J\xi}\Omega=\d(J\xi\lrcorner\Omega)+J\xi\lrcorner\d\Omega=-\d \theta+J\xi\lrcorner(\theta\wedge\Omega)=0.
\end{equation}
Moreover, on pluricanonical manifolds, equation \eqref{djt} gives
\begin{equation}\label{lo}
\L_\xi\Omega=\d(\xi\lrcorner\Omega)+\xi\lrcorner\d\Omega=\d (J\theta)+\xi\lrcorner(\theta\wedge\Omega)=0.
\end{equation}
From \eqref{lieg} and \eqref{lo} we infer
\begin{equation}\label{lieJ}
\L_\xi J=2JS,\qquad \L_{J\xi}J=-2S.
\end{equation}
We notice that \eqref{com} implies $[\L_\xi,\L_{J\xi}]=\L_{[\xi,J\xi]}=0$, and thus from \eqref{lieJ}:
\begin{equation}\label{lieS}
\L_\xi S=-\frac12\L_\xi\L_{J\xi}J=-\frac12\L_{J\xi}\L_\xi J=-\L_{J\xi}(JS)=2S^2-J\L_{J\xi}S,
\end{equation}
which (after composing with $J$ on the left) also reads 
\begin{equation}\label{lieJS}
\L_{J\xi} S=J\L_{\xi}S-2JS^2.
\end{equation}
Taking a further Lie derivative in \eqref{lieS} and using \eqref{lieJ} yields 
\begin{align*}
\L_{J\xi}\L_\xi S={}&2S\L_{J\xi}S+2(\L_{J\xi}S)S+2S\L_{J\xi}S-J\L_{J\xi}^2S\\
={}&4S\L_{J\xi}S+2(\L_{J\xi}S)S-J\L_{J\xi}^2S.
\end{align*}
Similarly, from \eqref{lieJS} and \eqref{lieJ} we obtain:
\begin{align*}
\L_{\xi}\L_{J\xi} S={}&2JS\L_{\xi}S+J\L_\xi^2 S-4JS^3-2J(\L_{\xi}S)S-2JS\L_{\xi}S\\
={}&J\L_\xi^2 S-4JS^3-2J(\L_{\xi}S)S\\
={}&J\L_\xi^2 S-8JS^3-2(\L_{J\xi}S)S.
\end{align*}
Comparing the last two equations and using $\L_\xi\L_{J\xi}=\L_{J\xi}\L_\xi$ we obtain
\begin{equation}\label{final}
J(\L_\xi^2 S+\L_{J\xi}^2S)=4S\L_{J\xi}S+4(\L_{J\xi}S)S+8JS^3.
\end{equation} 
We compose with $-SJ$ to the left and take the trace in the above equation:
\[\tr(S(\L_\xi^2 S+\L_{J\xi}^2S))=-4\tr(SJS(\L_{J\xi}S))-4\tr(SJ(\L_{J\xi}S)S)+8\tr(S^4)=8\tr(S^4),\]
from the trace identity and the hypothesis $SJ=-JS$. Using this we compute:
\begin{align*}
(\L_\xi^2+\L_{J\xi}^2)(\tr(S^2))={}&\tr\left((\L_\xi^2S)S+2(\L_\xi S)^2+S(\L_\xi^2S)+(\L_{J\xi}^2S)S+2(\L_{J\xi} S)^2+S(\L_{J\xi}^2S)\right)\\
={}&2\tr((\L_\xi S)^2)+2\tr((\L_{J\xi} S)^2)+2\tr\left(S(\L_\xi^2S)+S(\L_{J\xi}^2S)\right)\\
={}&2\tr((\L_\xi S)^2)+2\tr((\L_{J\xi} S)^2)+16\tr(S^4).
\end{align*}
By taking the Lie derivative with respect to $\xi$ of the relation $g(S\cdot,\cdot)=g(\cdot,S\cdot)$ and using \eqref{lieg} we immediately get 
$g(\L_\xi S\cdot,\cdot)=g(\cdot,\L_\xi S\cdot)$, i.e., the endomorphism $\L_\xi S$ is symmetric. Taking now the Lie derivative 
of the relation $SJ+JS=0$ with respect to $\xi$ and using \eqref{lieS} we obtain that $\L_\xi S$ anti-commutes with $J$. 
Finally, \eqref{lieJS} shows that the symmetric part of $\L_{J\xi}S$ is $J\L_\xi S$ and its skew-symmetric part is $-2JS^2$. 
The previous relation thus reads
\begin{align*}
(\L_\xi^2+\L_{J\xi}^2)(\tr(S^2))={}&2\tr((\L_\xi S)^2)+2\tr((\L_{J\xi} S)^2)+16\tr(S^4)\\
={}&2\tr((\L_\xi S)^2)+2\tr((\L_{\xi} S)^2-4S^4)+16\tr(S^4)\\
={}&4\tr((\L_\xi S)^2)+8\tr(S^4).
\end{align*}
We use now the compactness assumption: there exists a point $x_{max}\in M$ where $\tr(S^2)$, the square norm of $S$, attains its supremum. 
At $x_{max}$ the left hand side of the equation 
above is non-positive, while the right hand side is non-negative (since we have seen that $\L_\xi S$ is symmetric). 
We deduce that $\tr(S^4)$ -- and thus $S$ itself -- both vanish at $x_{max}$, so $S$ vanishes identically. 
This is the conclusion of Theorem \ref{main}.

\section{Non-compact pluricanonical manifolds}
Our method of proof extends partially to the case where the pluricanonical manifold 
$(M,J,g)$ is complete but not compact. 
\begin{theorem}
Let $(M,J,g)$ be a complete pluricanonical manifold which is not Vaisman. 
Then there exists a Riemannian surface $\Sigma$ isometric 
either to the Euclidean plane $\mathbb{R}^2$ or to a flat cylinder 
$\mathbb{R}^2/l\mathbb{Z}$ for some radius $l>0$, and a holomorphic and isometric immersion $\Phi:\Sigma\to M$ with vanishing second fundamental form. 
\end{theorem}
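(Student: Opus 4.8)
The plan is to take $\Sigma$ to be the integral leaf through a point $x_0\in M$ at which $S:=\nabla\xi$ does not vanish; such a point exists precisely because $M$ is not Vaisman. Recall from Section 3 that $|\theta|^2$ is constant, and that this constant must be some $c>0$ since the structure is proper; that $S\xi=SJ\xi=0$; and that the distribution spanned by $\xi$ and $J\xi$ is integrable with totally geodesic leaves, all four ambient covariant derivatives $\nabla_\xi\xi,\nabla_\xi(J\xi),\nabla_{J\xi}\xi,\nabla_{J\xi}(J\xi)$ being zero. On such a leaf the normalized frame $\{\xi/\sqrt c,J\xi/\sqrt c\}$ is thus parallel and orthonormal, which already suggests that the leaf is flat. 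I will show that it is a complete flat totally geodesic holomorphic surface, and then that completeness, together with the existence of this global parallel frame, forces it to be a plane or a cylinder.

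First I would construct $\Phi$ explicitly from the flows. Since $\xi$ and $J\xi$ have constant length $\sqrt c$ on the complete manifold $M$, they are bounded, hence complete vector fields; as they commute by \eqref{com}, their flows $\phi^\xi_s$ and $\phi^{J\xi}_t$ define an action of $\mathbb{R}^2$, and I set $\Phi(s,t):=\phi^\xi_s\big(\phi^{J\xi}_t(x_0)\big)$. Then $\d\Phi(\partial_s)=\xi$ and $\d\Phi(\partial_t)=J\xi$ are orthogonal of length $\sqrt c$, so $\Phi$ is an immersion with $\Phi^*g=c\,(\d s^2+\d t^2)$ flat; it is holomorphic once $\Sigma$ is equipped with the complex structure $\partial_s\mapsto\partial_t$, since $\d\Phi(\partial_t)=J\xi=J\,\d\Phi(\partial_s)$; and its second fundamental form vanishes because the four covariant derivatives above vanish identically (not merely their normal parts). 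The image is the orbit $\mathbb{R}^2/\Gamma$, where $\Gamma\subset\mathbb{R}^2$ is the closed stabilizer of $x_0$; since $\Phi$ is an immersion the orbit is two-dimensional, so $\Gamma$ is discrete, i.e. a lattice of rank $0$, $1$ or $2$, yielding after rescaling of the coordinates the plane, a flat cylinder, or a flat torus respectively.

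The crucial step is to rule out the torus, and this is where the computation of Section 3 is reused. The identity \eqref{final}, and hence
\[(\L_\xi^2+\L_{J\xi}^2)\big(\tr(S^2)\big)=4\tr\big((\L_\xi S)^2\big)+8\tr(S^4),\]
holds on any pluricanonical manifold, and its right-hand side is nonnegative because $\L_\xi S$ and $S$ are symmetric. On functions $\L_\xi$ and $\L_{J\xi}$ act along $\Phi$ as $\partial_s$ and $\partial_t$, so $u:=\tr(S^2)\circ\Phi$ satisfies $(\partial_s^2+\partial_t^2)u\ge 0$, i.e. $u$ is subharmonic for the flat metric. If $\Gamma$ had rank $2$, then $u$ would descend to a subharmonic function on the compact torus $\mathbb{R}^2/\Gamma$, hence be constant, forcing $\tr(S^4)\equiv 0$ and therefore $S\equiv 0$ along the leaf; this contradicts $S(x_0)\neq 0$. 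Thus $\Gamma$ has rank at most $1$, and $\Sigma$ is isometric to $\mathbb{R}^2$ or to a flat cylinder $\mathbb{R}^2/l\mathbb{Z}$.

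The main obstacle is exactly this exclusion of the compact case: everything else follows almost formally from the structural identities of Section 3, whereas ruling out the torus requires recognizing the Bochner-type identity \eqref{final} as a subharmonicity statement on the leaf and invoking the maximum principle. A secondary point requiring care is the justification that $\xi$ and $J\xi$ are complete and that the stabilizer $\Gamma$ is discrete; both rest on $\xi,J\xi$ being everywhere orthogonal of constant positive length $\sqrt c$, so that $\Phi$ is a genuine immersion and the orbit is an honest surface.
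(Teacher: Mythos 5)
Your proof is correct and follows essentially the same route as the paper: take the leaf of the totally geodesic distribution $\{\xi,J\xi\}$ through a point where $S\neq 0$, observe it is a flat, holomorphic, totally geodesic surface which must be a plane, a cylinder, or a torus, and exclude the torus by combining the identity $(\L_\xi^2+\L_{J\xi}^2)(\tr(S^2))=4\tr((\L_\xi S)^2)+8\tr(S^4)$ from Section 3 (valid without compactness of $M$) with the maximum principle on the compact leaf. The only difference is one of implementation: where the paper appeals abstractly to the completeness of totally geodesic leaves and the classification of complete flat K\"ahler surfaces, you build the immersion explicitly as the orbit map of the $\mathbb{R}^2$-action generated by the commuting complete flows of $\xi$ and $J\xi$, realizing $\Sigma=\mathbb{R}^2/\Gamma$ with $\Gamma$ the (necessarily discrete) stabilizer --- a more self-contained justification of the same facts, which also makes the subharmonicity of $\tr(S^2)$ along the leaf transparent.
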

\begin{proof}
Each leaf $F$ of the foliation tangent to the totally geodesic distribution $\{\xi,J\xi\}$ 
is totally geodesic and, although not necessarily a submanifold in $M$, is a complete flat surface. 
More precisely, there exists a flat Riemannian surface $\Sigma$ and a holomorphic and isometric injective immersion $\Phi:\Sigma\to M$ with vanishing second fundamental form such that $F=\Phi(\Sigma)$.
The universal cover 
of $\Sigma$ is isometric to the Euclidean plane, hence $\Sigma$
is isomorphic (as K\"ahler manifold) to either $\mathbb{R}^2$, a flat cylinder, or a flat torus.

If $\Sigma$ is compact, the endomorphism $S$ vanishes over $F=\Phi(\Sigma)$ by the same argument as in the last paragraph 
of the proof of Theorem \ref{main}. So if $(M,J,g)$ is not Vaisman, we must have at least one non-compact leaf, hence the conclusion of the theorem. 
\end{proof}
We do not know whether there exist complete non-compact pluricanonical manifolds which are not Vaisman.

\section{Final remark}

Kokarev's original motivation for introducing pluricanonical lcK metrics was an attempt to generalize Siu's strong rigidity \cite{S} to a wider class of manifolds. In \cite[Theorem 5.1]{K} he makes the following statement: 

{\em If $M$ is a compact pluricanonical lcK manifold homotopic to a compact locally Hermitian symmetric space of non-compact type $M'$ whose universal cover has no hyperbolic plane as a factor, then $M$ is biholomorphic to $M'$}.

This statement is in fact empty since by Theorem \ref{main}, every compact pluricanonical lcK manifold is Vaisman, thus its first Betti number is odd \cite{KS}, \cite{V}, whereas the first Betti number of a compact locally Hermitian symmetric space is even.


\begin{thebibliography}{9}

\bibitem{Be} F.~A.~Belgun, {\sl On the metric structure of non-K\"ahler complex surfaces}, Math.\ Ann.\ {\bf 317} (2000), no.\ 1, 1--40.
\bibitem{Br} M.~Brunella, {\sl A characterization of Inoue surfaces}, Comment.\ Math.\ Helv.\ {\bf 88} (2013), no.\ 4, 859--874.
\bibitem{CT} I.~Chiose, M.~Toma, {\sl On compact complex surfaces of K\"ahler rank one}, Amer.\ J.\ Math.\ {\bf 135} (2013), no.\ 3, 851--860.
\bibitem{KO} Y.~Kamishima, L.~Ornea, {\sl Geometric flow on compact locally conformally K\"ahler manifolds}, 
    Tohoku Math.\ J.\ {\bf 57} (2005), no.\ 2, 201--221.
\bibitem{KS} T.~Kashiwada, S.~Sato, {\sl On harmonic forms in compact locally conformal K\"ahler manifolds with the parallel Lee form,} Ann. Fac. Sci. Univ. Nat. Za\"\i re (Kinshasa) Sect. Math.-Phys. {\bf 6} (1980), no.\ 1-2, 17--29.
\bibitem{Ka} M.~Kato, {\sl Some Remarks on Subvarieties of Hopf Manifolds}, Tokyo J.\ Math.\ {\bf 2} (1979), no.\ 1, 47--61.
\bibitem{K} G.~Kokarev, {\sl On pseudo-harmonic maps in conformal geometry}, Proc.\ London Math.\ Soc.\ {\bf 99} (2009), 168--194.
\bibitem{M} A.~Moroianu, {\sl Compact lcK manifolds with parallel vector fields.} 
    Complex Manifolds {\bf 2} (2015), 26--33.
\bibitem{OV1} L.~Ornea, M.~Verbitsky, {\sl Locally conformal K\"ahler manifolds with potential,} Math.\ Ann.\ {\bf 348} (2010), 25--33.
\bibitem{OV2} L.~Ornea, M.~Verbitsky, {\sl Locally conformally K\"ahler metrics obtained from pseudoconvex shells}, 
    Proc.\ Amer.\ Math.\ Soc.\ {\bf 144} (2016), 325--335.
\bibitem{OV3} L.~Ornea, M.~Verbitsky, {\sl Compact pluricanonical manifolds are Vaisman}, arXiv:1512.00968.
\bibitem{OV4} L.~Ornea, M.~Verbitsky, {\sl Hopf surfaces in locally conformally K\"ahler manifolds with potential}, arXiv:1601.07421.
\bibitem{OV5} L.~Ornea, M.~Verbitsky, {\sl LCK rank of locally conformally K\"ahler manifolds with potential}, arXiv:1601.07413.
\bibitem{S} Y.~T.~Siu, {\sl The complex-analyticity of harmonic maps and the strong rigidity of compact K\"ahler manifolds},
Ann.\ Math.\ (2) {\bf 112} (1980), 73--111.
\bibitem{V} I. Vaisman, {\sl Generalized Hopf manifolds,}  Geom.\ Dedicata {\bf 13} (1982), no.\ 3, 231--255.

\end{thebibliography}
\end{document}